\newtheorem{thm}{Theorem}[section]
\newtheorem{lemma}[thm]{Lemma}
\newtheorem{cor}[thm]{Corollary}
\theoremstyle{definition}
\newtheorem{df}[thm]{Definition}
\newtheorem{exam}[thm]{Example}
\DeclareMathOperator{\im}{im}
\def\S{\Sigma}
\def\cC{\mathcal C}
\def\cB{\mathcal B}
\def\cG{\mathcal G}
\def\cH{\mathcal H}
\def\R{{\mathbb R}}
\def\F{{\mathbb F}}
\newcommand\risS[6]{\raisebox{#1pt}[#5pt][#6pt]{\begin{picture}(#4,15)(0,0)
  \put(0,0){\includegraphics[width=#4pt]{#2.eps}} #3
     \end{picture}}}
\title[Polynomial invariants of graphs on surfaces]
  {Polynomial invariants of graphs on surfaces}
\author{R.~Askanazi, S.~Chmutov, C.~Estill, J.~Michel, P.~Stollenwerk}
\begin{document}
\maketitle

\address{Department of Mathematics, The Ohio State University,
231 West 18th Avenue, Columbus, OH 43210\hspace{\fill} \linebreak 
{\tt askanazi.3@osu.edu}, 
{\tt chmutov@math.ohio-state.edu},\hspace{\fill} \linebreak
{\tt estill@math.ohio-state.edu}, 
{\tt michel.82@osu.edu},
{\tt stollenwerk.2@osu.edu} }

\begin{classification}
05C10, 05C31, 57M15, 57M25, 57M27
\end{classification}
%\subjclass[2010]{05C10, 05C31, 57M15, 57M25, 57M27}
\date{}

\begin{keywords}
%\keywords{
Graphs on surfaces, ribbon graphs, matroids, Krushkal polynomial, Las Vergnas polynomial, Bollob\'as-Riordan polynomial
%}
\end{keywords}

\begin{abstract}
For a graph embedded into a surface, we relate many combinatorial parameters of the cycle matroid of the graph and the bond matroid of the dual graph with the topological parameters of the embedding. This gives an expression of the polynomial, defined by M.~Las Vergnas in a combinatorial way using matroids as a specialization of the Krushkal polynomial, defined using the symplectic structure in the first homology group of the surface. 
\end{abstract}

\maketitle

\section*{Introduction} \label{s:intro}
Hassler Whitney introduced matroids in 1935 and gave two major examples of
matroids, the cycle matroid of a graph and a matroid defined by a finite collection of vectors in a vector space. 
Matroids have since found many applications in other parts of mathematics, including, in particular, topology 
where the matroidal properties of a hyperplane arrangement are very closely related to the topological properties of the arrangement (see \cite{BLSWZ,White,Yu,Zieg} and the references
therein).

In this paper we show that the matroids associated to graphs are intimately related to its topology

We consider graphs on surfaces.
Suppose that a graph $G$ is embedded into a surface $\S$ in a cellular manner; that is each connected component (face) of the complement to the graph is homeomorphic to a disc. Then we can define a dual graph $G^*$ embedded into the same surface $\S$ in a natural way.
We associate {\it cycle matroid} $\cC(G)$ with the graph $G$, and with the dual graph $G^*$ we associate its {\it bond matroid} $\cB(G^*)$ 
{\it dual} to $\cC(G^*)$.
In the planar case, when $\S$ is a sphere, $\cB(G^*)$ is isomorphic to $\cC(G)$.
Thus the difference between isomorphism  classes of $\cC(G)$ and $\cB(G^*)$ can be considered as a measure of non-planarity of the embedding and should reflect the topology of the pair $(\S,G)$.
Various combinatorial parameters of the matroids $\cC(G)$ and $\cB(G^*)$ can be assembled into the Las Vergnas polynomial of a matroid perspective $\cB(G^*)\to \cC(G)$ introduced in \cite{LV,LV2,LV3}.

On the other hand, the topological parameters of the embedding of $G$ into $\S$ can be assembled into the Krushkal polynomial introduced in \cite{Kru}.
In this paper we show that the Las Vergnas polynomial is a specialization of the Krushkal polynomial.
In the process, we relate many combinatorial parameters of the matroids $\cC(G)$ and $\cB(G^*)$ with the topological parameters of the embedding.

In Section \ref{s:LV} we briefly review matroids and their combinatorial parameters, and introduce the Las Vergnas polynomial.
We introduce the Krushkal polynomial in Section \ref{s:Kp}.
The main theorem is formulated in Section \ref{s:Th}, wherein we also obtain the duality property of the Las Vergnas polynomial as a consequence of our main theorem.
We begin to prove the main theorem in Section \ref{s:Th} and finish in Section \ref{s:mct} which consists of three lemmas relating the topological parameters of the embedding $G\hookrightarrow \S$ with the combinatorial parameters of its matroids.
We conclude in Section \ref{s:Kp-BR} with a discussion on the relation of the Krushkal polynomial with the Bollob\'as-Riordan polynomial of ribbon graphs.

%\section*{Acknowledgments}
This work has been done as a part of the Summer 2010 undergraduate research working group
\begin{center}\verb#http://www.math.ohio-state.edu/~chmutov/wor-gr-su10/wor-gr.htm#
\end{center}
``Knots and Graphs" 
at the Ohio State University.
We are grateful to all participants of the group for valuable discussions and to the OSU Honors Program Research Fund for the student financial support.
We thank the anonymous referee and editors for various suggestions improving the exposition of the paper.

\section{Matroids and the Las Vergnas polynomial} \label{s:LV}

For additional background on matroids we refer to \cite{Ox,Wel}, in addition to Whitney's classical paper \cite{Whi}.

\begin{df}
{\it A matroid} is a finite set $M$ with a {\it rank}
function $r$ that assigns a number to a subset of $M$ and
satisfies the following axioms:
\begin{itemize}
\item[({\bf R1})] {\it The rank of an empty subset is zero.}
\item[({\bf R2})] {\it For any subset $H\subset M$ and any element $y\not\in H$,
$$r(H\cup\{y\})=
   \left\{\begin{array}{l} r(H) \mbox\ ,\qquad or\\ r(H)+1\ .\end{array}\right.$$}
\item[({\bf R3})] {\it For any subset $H$ and two elements $y$,$z$ not in $H$,
if\ $r(H\cup y)=r(H\cup z)=r(H)$, then $r(H\cup\{y, z\})=r(H)$.}
\end{itemize}
\end{df}

There are two major examples of matroids we will be focusing on, although these examples do not exhaust the collection of matroids.
That is to say, there are matroids that do not represent either of the following situations.

The first example is the cycle matroid $\cC(G)$ of a graph $G$.
The underlying set $M$ is the set of edges $E(G)$ and the rank function is given by $r(H):=v(G)-c(H)$, where $v(G)$ is the number of vertices of $G$ and $c(H)$ is the number of connected components of the spanning subgraph of $G$ consisting of all the vertices of $G$ and edges of $H$.

The second example is a finite set of vectors in a vector space.
We may think about them as column vectors of a matrix.
The rank function is the dimension of the subspace spanned by the subset of vectors, or the rank of the corresponding submatrix. This example generalizes the first one when the matrix (over $\F_2$) is the incidence matrix of $G$, i.e. the matrix of the simplicial boundary map from the set of edges of $G$ to the set of vertices of $G$.

A subset $H\subset M$ is called {\it independent} if $r(H)=|H|$.
In the first example, the independent subsets are those subsets of edges which do not contain cycles.
In the second, independent subsets correspond to linearly independent subsets of vectors.
A {\it base} of a matroid is a maximal independent set.

A subset $H\subset M$ is called a {\it circuit} if $r(H)=|H|-1$.
In the first example, this new notion of a circuit and the traditional notion of a circuit in a graph match.
In the second example, a circuit is a subset of vectors with precisely one linear relation between them.

Given any matroid $M$, there is a dual matroid $M^*$ with the same underlying set and with the rank function given by $r_{M^*}(H):=|H|+r_M(M\setminus H)-r(M)$.
In particular $r(M)+r(M^*)=|M|$. Any base of $M^*$ is a complement to a base of $M$. 

The dual matroid to the cycle matroid of a graph $G$ is called the {\it bond matroid} of $G$: $\cB(G):=(\cC(G))^*$.
The circuits of $\cB(G)$ are the minimal edge cuts, also known as the {\it bonds} of $G$.
These are minimal collections of the edges of $G$ which, when removed from $G$, increase the number of connected components.
The Whitney planarity criteria \cite{Whi} says that a graph $G$ is planar if and only if its bond matroid $\cB(G)$ is the cycle matroid of some graph.
In this case, it will be the cycle matroid of the dual graph, $\cB(G)=(\cC(G))^*=\cC(G^*)$.

\bigskip
\begin{df}[\cite{LV,LV2,LV3}]
For two matroids $M$ and $M'$, a bijection $M\to M'$  is called a {\it matroid perspective} if any circuit of $M$ is mapped to a union of circuits of $M'$.
Equivalently, 
$$r_M(X)-r_M(Y) \geqslant r_{M'}(X)-r_{M'}(Y)
\mbox{\qquad for all\quad } Y\subseteq X\ ,
$$
where $r_M$ and $r_{M'}$ are the rank functions of matroids $M$ an $M'$.
\end{df}

\begin{df}[\cite{LV,LV2,LV3}]
The {\it Tutte polynomial} of a matroid perspective $M\to M'$ is the polynomial in variables $x,y,z$ defined by 
$$
%\begin{equation}\label{lasV}
T_{M\to M'}\ := \sum_{H\subseteq M} (x-1)^{r(M')-r_{M'}(H)} (y-1)^{n_M(H)}  
  z^{(r(M)-r_M(H))-(r(M')-r_{M'}(H))}\ ,
%\end{equation}
$$
where $n_M(H):=|H|-r_M(H)$ is the nullity in $M$.
\end{df}

{\bf Properties.} The usual Tutte polynomial of matroids $M$ and $M'$ can be recovered from the Tutte polynomial of matroid perspective in the following ways:\\
$\begin{array}{l}
T_M(x,y) = T_{M\to M}(x,y,z)\ ;\\
T_M(x,y) = T_{M\to M'}(x,y,x-1)\ ; \\
T_{M'}(x,y) = (y-1)^{r(M)-r(M')} T_{M\to M'}(x,y,\frac1{y-1})\ . 
\end{array}$

\bigskip
For graphs $G$ and $G^*$ dually cellularly embedded in a surface $\S$, the natural map of the bond matroid of $G^*$ onto the cycle matroid of $G$, $\cB(G^{*})\to \cC(G)$, is a matroid perspective. Formally this follows from
a theorem of J.~Edmonds \cite{Ed}. Informally, a circuit $c$ of $\cB(G^{*})$
is a minimal cut of the dual graph $G^*$ which separates the vertices of $G^*$ into two sets. The vertices of $G^*$ correspond to the faces of the original graph $G$. Thus, the minimal cut separates the faces of $G$.
We may think about cutting an edge of $c$ as cutting the surface $\S$ along the corresponding edge of $G$. Therefore, in terms of the original graph $G$, the circuit $c$ corresponds to a subset of edges of $G$ which separate the surface $\S$. Topologically it represents a zero-homologous cycle which (in general) consists of several circuits. Hence, a circuit of $\cB(G^{*})$ is mapped to a union of circuits of $\cC(G)$.

We call the Tutte polynomial $T_{\cB(G^{*})\to \cC(G)}(x,y,z)$ 
of the matroid perspective 
$\cB(G^{*})\to \cC(G)$ the {\it Las Vergnas polynomial} of the graph $G$ on the surface $\S$, and denote it $LV_{G,\S}(x,y,z)$. 
One goal of this paper is to give a topological interpretation of various combinatorial ingredients of this polynomial.

In this paper we assume that $\S$ is orientable.

\begin{exam}\label{ex1}
Let $G$ be a graph with one vertex and two loops embedded into a torus as shown. Then $G^*$ is the similar graph.
$$\risS{-20}{rg-torus-d}{\put(80,50){$G^*$}}{100}{35}{25}\hspace{2cm}
\risS{-20}{rg-torus}{\put(0,45){$G$}}{100}{0}{0}
$$
In this case the bond matroid $M=\cB(G^*)$ has rank 2, and the cycle matroid $M'=\cC(G)$ has rank 0. 
For any subset $H$: $r_M(H)=|H|$, $n_M(H)=0$, and $r_{M'}(H)=0$. We have
$$LV_{G,\S}(x,y,z)=z^2+2z+1\ .$$
\end{exam}

\section{Krushkal polynomial} \label{s:Kp}

V. Krushkal discovered his polynomial while researching topological quantum field theories and the algebraic and combinatorial properties of models of statistical mechanics.  This polynomial can be seen as a generalization of the Tutte and Bollobas-Riordan polynomials \cite{Kru}.

\begin{df}[\cite{Kru}]
For a graph $G$ embedded into a (not necessarily connected, but orientable) surface $\S$,
\begin{equation}\label{krush}
P_{G,\S}(X, Y, A, B) := \sum_{H\subseteq G} X^{c(H)-c(G)} Y^{k(H)} A^{s(H)/2}  
   B^{s^\perp(H)/2},
\end{equation}
where\vspace{-5pt}
\begin{itemize}
\item $c(H)$ is the number of connected components of the spanning subgraph 
%with the edge set 
$H$; 
\item the restriction of the embedding $G\hookrightarrow\S$ to $H$ induces a map on the first homology groups and we define
$$k(H):= \dim(\ker(H_1(H;\R) \to H_1(\S;\R)))\ ;$$
\item $s(H)$ is equal to twice the genus of a regular neighborhood of the spanning subgraph $H$ in $\S$ (the neighborhood is a surface with boundary, and its genus is defined as the genus of the closed surface obtained by attaching a disk to each boundary circle); 
\item $s^\perp(H)$ is equal to twice the genus of the surface obtained by removing a regular neighborhood of $H$ from $\S$.
\end{itemize}
\end{df}

\bigskip
{\bf Remark.} V.~Krushkal indicates in his paper \cite{Kru} that the parameters $s(H)$ and $s^\perp(H)$ have the following interpretation in terms of the symplectic bilinear form on the vector space $H_1(\S;\R)$ given by the intersection number.
For a given spanning subgraph $H$, let $V$ be its image in the homology group:
$$H_1(\S;\R)\supset V := V(H) := \im(H_1(H;\R) \to H_1(\S;\R))\ .
$$
For the subspace $V$ we can define its orthogonal complement $V^\perp$ in $H_1(\S;\R)$ with respect to the symplectic intersection form. Then
$$s(H)=\dim(V/(V\cap V^\perp))\ ,\qquad s^\perp(H)=\dim(V^\perp/(V\cap V^\perp))\ .
$$

\begin{exam}\label{ex2}
Continuing with example \ref{ex1}, for the graph $G$ on the torus $\S$, the map $H_1(G;\R) \to H_1(\S;\R)$ is not degenerate. Therefore, $k(H)=0$ for any subset $H$.
Also, $c(H)=1$ for any $H$.
If $H\not=G$, then its regular neighborhood has genus 0. Hence, for such $H$, $s(H)=0$, while $s(G)=2$.
Similarly, if $H\not=\emptyset$, then the regular neighborhood of its complement also has genus 0. So, for such $H$, $s^\perp(H)=0$, while $s^\perp(\emptyset)=2$.
Combining all this we get
$$P_{G,\S}(X, Y, A, B) = B +2+A\ .
$$

\end{exam}

\section{Main Theorem} \label{s:Th}

\begin{thm}
Suppose $G$ is cellularly embedded in an orientable surface $\S$ of genus $g$.
Then
\begin{equation}\label{subst}
   LV_{G,\S}(x,y,z) = z^g P_{G,\S}(x-1, y-1, z^{-1}, z).
\end{equation}
\end{thm}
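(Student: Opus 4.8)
The plan is to compare the two polynomials term by term, matching each subset $H \subseteq E(G)$ appearing in the sum defining $LV_{G,\S}$ with the same subset $H$ appearing in the sum defining $P_{G,\S}$. Writing the Las Vergnas polynomial out, the contribution of $H$ to $LV_{G,\S}(x,y,z)$ is
$$(x-1)^{r(M')-r_{M'}(H)}\,(y-1)^{n_M(H)}\,z^{(r(M)-r_M(H))-(r(M')-r_{M'}(H))},$$
with $M=\cB(G^*)$ and $M'=\cC(G)$, while the contribution of $H$ to $z^g P_{G,\S}(x-1,y-1,z^{-1},z)$ is
$$(x-1)^{c(H)-c(G)}\,(y-1)^{k(H)}\,z^{g - s(H)/2 + s^\perp(H)/2}.$$
So it suffices to establish, for every spanning subgraph $H$, the three identities
$$r(M')-r_{M'}(H) = c(H)-c(G), \qquad n_M(H) = k(H),$$
$$\bigl(r(M)-r_M(H)\bigr)-\bigl(r(M')-r_{M'}(H)\bigr) = g - \tfrac{s(H)}{2} + \tfrac{s^\perp(H)}{2}.$$
These are precisely the three lemmas that, per the introduction, occupy Section~\ref{s:mct}.

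First I would dispose of the easy identity. Since $M'=\cC(G)$ has rank function $r_{M'}(H)=v(G)-c(H)$ and $r(M')=v(G)-c(G)$, we get $r(M')-r_{M'}(H)=c(H)-c(G)$ immediately; this handles the $x$-exponent and requires no topology. For the $y$-exponent, I would use duality of matroids: $n_M(H)=|H|-r_M(H)$ with $M=\cB(G^*)=(\cC(G^*))^*$, so $r_M(H)=|H|+r_{\cC(G^*)}(E\setminus H)-r(\cC(G^*))$, giving $n_M(H)=r(\cC(G^*))-r_{\cC(G^*)}(E\setminus H)=c(E\setminus H \text{ in } G^*)-c(G^*)$. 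Translating edges of $G$ to edges of $G^*$ and components of $G^*$ to faces, this counts, roughly, the number of independent cycles that $H$ bounds on $\S$ — i.e. the dimension of the kernel of $H_1(H;\R)\to H_1(\S;\R)$. Making this precise is the content of one of the three lemmas and uses cellular embedding (Euler's formula relating $v$, $e$, $f$, $g$) plus the fact that $G^*$ dual cellular in $\S$ has faces dual to vertices of $G$.

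The hard part will be the $z$-exponent identity, because it genuinely mixes the matroids of $G$ and $G^*$ with the genus-type invariants $s(H)$ and $s^\perp(H)$. My approach would be to compute both $r(M)-r_M(H)$ and $r(M')-r_{M'}(H)$ as combinatorial quantities (the latter is $c(H)-c(G)$ from above; the former, via the duality computation, is $r(\cC(G^*)) - r_{\cC(G^*)}(E\setminus H) - (|H|-r_{\cC(G)}(H))$ after bookkeeping, or more cleanly $r_{\cB(G^*)}(E) - r_{\cB(G^*)}(H)$), and then identify the difference with the genus data using the symplectic description in Krushkal's Remark: if $V=V(H)=\im(H_1(H;\R)\to H_1(\S;\R))$, then $s(H)=\dim(V/(V\cap V^\perp))$ and $s^\perp(H)=\dim(V^\perp/(V\cap V^\perp))$, while $\dim H_1(\S;\R)=2g$. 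Since the intersection form is nondegenerate on $H_1(\S;\R)$, $\dim V + \dim V^\perp = 2g$, hence $\tfrac12 s(H) - \tfrac12 s^\perp(H) = \tfrac12(\dim V - \dim V^\perp) = \dim V - g$, so the right-hand side $g - \tfrac{s(H)}{2}+\tfrac{s^\perp(H)}{2}$ equals $2g - \dim V$. Thus the whole identity reduces to showing
$$\bigl(r(M)-r_M(H)\bigr)-\bigl(c(H)-c(G)\bigr) = 2g - \dim V(H),$$
which I would verify by expressing $\dim V(H)$ via the previously established $k(H)=n_M(H)$ (so $\dim V(H) = \dim H_1(H;\R) - k(H) = (|H| - v(G) + c(H)) - n_M(H)$) and then reducing everything to ranks and nullities of $\cC(G)$ and $\cB(G^*)$, where $r(M)+r(M')= \text{(rank of }\cC(G^*)\text{)} + \text{(rank of }\cC(G)\text{)}$ and Euler's formula $v - e + f = 2 - 2g$ (for connected $G$; in general with a $c(G)$ correction) closes the gap. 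The main obstacle is keeping the bookkeeping of components versus faces versus $2g$ consistent across the $G$/$G^*$ switch, especially when $H$ and its complement are each disconnected; I would organize this as the three lemmas of Section~\ref{s:mct} and check the end result against Examples~\ref{ex1} and~\ref{ex2}, where $LV = z^2+2z+1$ and $z^g P = z^2(z^{-1}+2+z) = z + 2z^2 + z^3$... so in fact one must be careful: the correct reading is $LV_{G,\S}(x,y,z) = z^g P_{G,\S}(x-1,y-1,z^{-1},z)$ with $g=1$ giving $z(A^{-1}\!+2+A)|_{A=z}$ wait — rather $P(X,Y,z^{-1},z) = z + 2 + z^{-1}$ evaluated then multiplied by $z^g=z$ yields $z^2+2z+1$, matching, which is the sanity check I would run first.
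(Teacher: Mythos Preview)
Your proposal is correct and follows essentially the same term-by-term strategy as the paper: match the $(x-1)$ exponent via the cycle-matroid rank formula, prove $n_M(H)=k(H)$ by computing $n_M(H)=c(G^*\setminus H)-c(G^*)$ through matroid duality and then identifying this with $\dim\ker(H_1(H)\to H_1(\S))$, and handle the $z$-exponent by combining $r(M)-r(M')=2g$ (Euler) with a relation between $s,s^\perp$ and the nullities. The only cosmetic difference is that where the paper quotes Krushkal's formula $n_{M'}(H)=k(H)+g+s(H)/2-s^\perp(H)/2$ as a black box, you re-derive the equivalent statement $\dim V(H)=r_M(H)-r_{M'}(H)$ directly from the symplectic identity $\dim V+\dim V^\perp=2g$; the two arguments are interchangeable.
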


\begin{proof}
The summands corresponding to each subgraph $H$ of $G$ will be shown to be equal.
Applying the substitution \eqref{subst} in \eqref{krush} we arrive at summands of the form
$$(x-1)^{c(H)-c(G)} (y-1)^{k(H)} z^{g-s(H)/2+s^\perp(H)/2}.
$$

The cycle matroid rank of a graph is given by $r(M') = v(G)-c(G)$ and that of a subgraph $H \subseteq G$ by $r_{M'}(H) = v(G) - c(H)$.
So $r(M')-r_{M'}(H) = c(H)-c(G)$ and hence the powers of the $(x-1)$ factor coincide.

It remains to prove the equality between the exponents of $(y-1)$ and $z$:
\begin{itemize}
\item $k(H)=n_M(H)$
\item $g-s(H)/2+s^\perp(H)/2=r_M(G)-r_{M'}(G)-r_M(H)+r_{M'}(H)$
\end{itemize}
These will be proved separately in the next section.
\end{proof}

\begin{cor}[\cite{LV,LV2,LV3}]
$$LV_{G^*,\S}(x,y,z) = z^{2g} LV_{G,\S}(y,x,z^{-1})\ .
$$
\end{cor}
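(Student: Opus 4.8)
The plan is to deduce this corollary from the main theorem together with the self-duality of the Krushkal polynomial. First I would recall Krushkal's duality theorem from \cite{Kru}: for $G$ cellularly embedded in $\S$,
$$P_{G^*,\S}(X,Y,A,B) = P_{G,\S}(Y,X,B,A)\ .$$
Intuitively this holds because passing to the dual graph replaces each spanning subgraph $H\subseteq G$ by the spanning subgraph $H^c\subseteq G^*$ on the complementary set of edges; a regular neighborhood of $H^c$ in $\S$ is (isotopic to) the complement of a regular neighborhood of $H$, so $s$ and $s^\perp$ get interchanged, while the pair of exponents $c(H)-c(G)$ and $k(H)$ is likewise interchanged.

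Next, since $G^*$ is also cellularly embedded in the same orientable surface $\S$ of genus $g$, the main theorem applied to $G^*$ gives
$$LV_{G^*,\S}(x,y,z) = z^g\,P_{G^*,\S}(x-1,y-1,z^{-1},z) = z^g\,P_{G,\S}(y-1,x-1,z,z^{-1})\ ,$$
where the second equality uses Krushkal's duality with $(X,Y,A,B)=(x-1,y-1,z^{-1},z)$. On the other hand, applying the main theorem to $G$ with $x,y,z$ replaced by $y,x,z^{-1}$ gives
$$LV_{G,\S}(y,x,z^{-1}) = z^{-g}\,P_{G,\S}(y-1,x-1,z,z^{-1})\ .$$
Eliminating the common factor $P_{G,\S}(y-1,x-1,z,z^{-1})$ between the two identities yields at once $LV_{G^*,\S}(x,y,z) = z^{2g}\,LV_{G,\S}(y,x,z^{-1})$.

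The only ingredient beyond elementary bookkeeping is Krushkal's duality theorem, which I would quote as a black box from \cite{Kru}. Alternatively, one can argue purely matroid-theoretically: the construction of Section~\ref{s:LV} applied to $G^*$ produces the matroid perspective $\cB(G)\to\cC(G^*)$, which is exactly the dual perspective $(\cC(G))^*\to(\cB(G^*))^*$ of $\cB(G^*)\to\cC(G)$; a short computation (substitute $H\mapsto M\setminus H$ in the defining sum) shows that for any matroid perspective $M\to M'$ one has $T_{(M')^*\to M^*}(x,y,z) = z^{r(M)-r(M')}\,T_{M\to M'}(y,x,z^{-1})$, and Euler's formula for the cellular embedding gives $r(\cB(G^*))-r(\cC(G)) = 2g$. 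Either way the main obstacle is not conceptual but purely a matter of correctly tracking the four-fold variable substitution and the resulting power of $z$; in particular one must check that the exponent is $2g$ and not $g$, which is exactly where the genus of $\S$ enters, through $r(\cB(G^*)) = v(G)-1+2g$ versus $r(\cC(G)) = v(G)-1$.
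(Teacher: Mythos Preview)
Your proposal is correct and takes essentially the same approach as the paper: apply the main theorem to both $G$ and $G^*$ and invoke Krushkal's duality $P_{G^*,\S}(X,Y,A,B)=P_{G,\S}(Y,X,B,A)$ to eliminate the Krushkal polynomial. The paper's own proof is a single sentence citing Krushkal's formula; you have simply written out the bookkeeping explicitly (and supplied an additional matroid-theoretic alternative that the paper does not give).
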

This follows from Krushkal's formula \cite{Kru}:  $P_{G^*,\S}(X, Y, A, B) = P_{G,\S}(Y, X, B, A)$ for cellular embeddings $G\hookrightarrow \S$.
   
\section{Matroidal combinatorics and combinatorial topology} \label{s:mct}

In this section we relate the topological parameters of the embedding $G\hookrightarrow \S$ with the combinatorial parameters of the matroid perspective $\cB(G^{*})\to \cC(G)$.

\begin{lemma}\label{l:4-1}
$k(H)=n_M(H)$.
\end{lemma}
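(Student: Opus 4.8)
The plan is to unwind both sides in terms of homology and the cycle‐matroid structure. Recall that $M = \cB(G^*)$ is the bond matroid of the dual graph, so its dual $M^* = \cC(G^*)$ is the cycle matroid of $G^*$. The nullity $n_M(H) = |H| - r_M(H)$ can therefore be rewritten using the duality formula $r_M(H) = |H| + r_{M^*}(M\setminus H) - r(M^*)$, which gives $n_M(H) = r(M^*) - r_{M^*}(M\setminus H) = r(\cC(G^*)) - r_{\cC(G^*)}(H^c)$, where $H^c$ denotes the complementary edge set. Since the cycle matroid rank is $r_{\cC(G^*)}(H^c) = v(G^*) - c(H^c)$, we get $n_M(H) = c(H^c) - c(G^*)$, the number of ``extra'' connected components of the spanning subgraph of $G^*$ on the complementary edges. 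So the lemma reduces to the purely topological statement $k(H) = c(H^c_{G^*}) - c(G^*)$, i.e. the dimension of the kernel of $H_1(H;\R)\to H_1(\S;\R)$ equals the corank (in the connectivity sense) of the complementary subgraph of $G^*$.

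To prove this topological identity, I would identify the kernel $\ker(H_1(H;\R)\to H_1(\S;\R))$ concretely. A $1$-cycle in $H$ that bounds in $\S$ bounds a $2$-chain; since $G$ is cellularly embedded, $H_2(\S;\R)$ is spanned by the fundamental class and the $2$-chains are combinations of the faces of $G$, which are exactly the vertices of $G^*$. A cycle in $H$ is null‐homologous in $\S$ precisely when it is the boundary of a union of faces of $G$ — equivalently, the dual edges crossed by that cycle form an edge cut in $G^*$ separating the corresponding vertex set of $G^*$ from its complement. The subtlety is to count these independently: I expect to set up the short argument that the space of such ``separating'' $1$-cycles of $H$ has dimension equal to the number of independent ways of partitioning $V(G^*)$ using only faces whose dual-edge boundaries avoid $H$, and this number is exactly $c(H^c_{G^*}) - c(G^*)$, because the connected components of the spanning subgraph of $G^*$ on $H^c$ are precisely the blocks one can freely lump together.

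Concretely, the cleanest route is a rank count via the boundary maps. Write the simplicial chain complex of $\S$ built from the CW structure given by $G$: $C_2 \xrightarrow{\partial_2} C_1 \xrightarrow{\partial_1} C_0$, where $C_2$ has basis the faces $=$ vertices of $G^*$, $C_1$ has basis the edges (shared by $G$ and $G^*$), and $C_0$ the vertices of $G$. Then $k(H) = \dim\bigl( (\ker \partial_1|_{C_1(H)}) \cap \im\partial_2 \bigr)$. Using the standard fact that $\partial_2$ restricted to $C_2$ together with the incidence of $G^*$ is the coboundary map of $G^*$, the image $\im\partial_2$ intersected with chains supported on $H$ corresponds to boundaries in $G^*$ supported away from $H$, and a short rank computation with $r(\cC(G^*))$, $c(G^*)$, and $c(H^c_{G^*})$ closes the gap. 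The main obstacle I anticipate is being careful about \emph{orientability and coefficients}: over $\R$ (not $\F_2$) one must track signs/orientations of faces so that $\im\partial_2$ is correctly identified with the cycle space spanned by the face boundaries, and one must confirm that the relevant rank of $\partial_2$ equals $v(G^*) - c(G^*) = r(\cC(G^*))$ — i.e. that the only global relation among all face boundaries is the single one coming from the fundamental class, which is where connectedness (and the genus/Euler characteristic bookkeeping) enters. Once that identification is pinned down, the lemma follows by linear algebra.
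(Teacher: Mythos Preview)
Your proposal is correct. The matroidal half --- reducing $n_M(H)$ via duality to $c(H^c_{G^*}) - c(G^*)$ --- is exactly what the paper does. For the topological half, however, the paper takes a different route: rather than working inside the CW chain complex, it first argues geometrically that $c(G^*\setminus H)$ equals $c(\S\setminus H)$, the number of components of the complement of a regular neighborhood of $H$ in $\S$, and then reads off $k(H) = c(\S\setminus H) - c(\S)$ from the long exact sequence of the pair $(\S,H)$, using $H_2(H)=0$, $\dim H_2(\S)=c(\S)$, and $\dim H_2(\S,H)=c(\S\setminus H)$. Your chain-level computation is really the same argument unpacked: the $2$-chains whose boundary is supported on $H$ are exactly those constant on each component of $G^*\setminus H$, so that subspace has dimension $c(G^*\setminus H)$, and modding out $\ker\partial_2$ (dimension $c(\S)$, by orientability) gives $k(H)$. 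The paper's long-exact-sequence packaging is cleaner and avoids the sign/orientation bookkeeping you correctly flagged as the delicate point; your version is more hands-on and keeps the dual graph $G^*$ visible throughout rather than passing through the intermediate quantity $c(\S\setminus H)$.
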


\begin{proof}
Let $N:=\cC(G^*)=(\cB(G^*))^*=M^*$.
The rank of a dual matroid can be defined in terms of the rank of a matroid by $r_M(H) = |H|+r_N(E \setminus H)-r_N(N) $, where $H$ is a subset of edges of $G^*$ that can be naturally identified with the corresponding subset of edges of $G$.
Since the nullity is defined as the number of edges minus the rank we have that $n_M(H) = r_N(N) - r_N(G^* \setminus H)$, where $G^* \setminus H$ is the spanning subgraph of $G^*$ consisting of the edges not in $H$ and all vertices of $G^*$.
Thus, because $N$ is the cycle matroid of the graph $G^*$, we have 
$$n_M(H) = (v(G^*)-c(G^*)) - (v(G^*)-c(G^*\setminus H))= c(G^*\setminus H) - c(G^*)\ .
$$

Obviously, $c(G^*)=c(G)$ and is equal to the number of connected component $c(\S)$ of $\S$.
   
Now we consider $H$ as a spanning subgraph of $G$.
We want to remove its regular neighborhood from the surface $\S$ and count the number of connected components $c(\S \setminus H)$.
First we remove small discs around all vertices of $H$, i.e. all vertices of $G$.
These are exactly the faces of $G^*$.
So we are left with a regular neighborhood of $G^*$ in $\S$. 
Secondly, removing neighborhoods of the edges of $H\subset G$ from $\S$ will give us the same surface, topologically, as deleting the corresponding neighborhoods of the edges of $H\subset G^*$, because these edges are transverse to each other.
In other words $c(G^*\setminus H)$ is the number of components of $\S \setminus H$.
Hence 
$$n_M(H) = c(\S\setminus H) - c(G)\ ,$$
where $H$ is regarded as a spanning subgraph of $G$.

   Let us turn our attention to the number that we wish to show $n_M(H)$ to be equal to.
Denote by $i_*:H_1(H;\R) \to H_1(\S;\R)$ the linear map induced by the composition of embeddings $H\hookrightarrow G\hookrightarrow \S$.
We have $k(H) = \dim(\ker(i_*))$.

The topological pair $(\S, H)$ gives us a long exact sequence of homology groups
$$\cdots \to H_2(H) \to H_2(\S) \to H_2(\S, H) \xrightarrow{\delta} H_1(H) \xrightarrow{i_*} H_1(\S) \to \cdots\ .
$$
$H_2(H)$ is trivial as $H$ is one dimensional,
$H_2(\S)$ has dimension equal to the number of components of $\S$.
and $H_2(\S,H)$ has dimension equal to the number of components of $\S\setminus H$.

So if we turn our attention to the short exact sequence
$$0 \to \R^{c(\S)} \to H_2(\S, H) \to \im \delta \to 0\ ,
$$
wherein $\im \delta = \ker i_*$, we see that 
$\dim \ker i_*= c(\S\setminus H) - c(\S) = c(\S\setminus H) - c(G)=n_M(H)$.
 \end{proof}

Thus the $(y-1)$ powers in the main theorem coincide.

In fact, the argument with the long exact sequence was used by V.~Krushkal 
\cite[end of proof of Theorem 3.1]{Kru} where he essentially proved that
$k(H)=c(\S\setminus H) - c(\S)$ using a slightly different terminology.
However, the relation of this parameter to the matroidal $n_M(H)$ was not addressed there.

\bigskip
\begin{lemma}\label{l:4-2}
$2g=r_M(G)-r_{M'}(G)$.
\end{lemma}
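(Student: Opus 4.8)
The plan is to unwind both sides in terms of the cycle matroids of $G$ and $G^*$ and then invoke Euler's formula for the cellular embedding $G \hookrightarrow \S$. Recall that $M = \cB(G^*)$, so $r_M(G) = |E(G)| - r_{\cC(G^*)}(G^*) = e(G) - (v(G^*) - c(G^*))$, where $e(G)$ is the number of edges (identified for $G$ and $G^*$). On the other hand $M' = \cC(G)$ gives $r_{M'}(G) = v(G) - c(G)$. Subtracting, and using that $v(G^*)$ equals the number of faces $f(G)$ of the embedding and that $c(G^*) = c(G)$, I get $r_M(G) - r_{M'}(G) = e(G) - f(G) + c(G^*) - v(G) + c(G) = e(G) - v(G) - f(G) + 2c(G)$.

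Next I would apply the Euler formula for a cellular embedding of a (possibly disconnected) graph into an orientable surface $\S$ of total genus $g$: $v(G) - e(G) + f(G) = 2c(G) - 2g$, equivalently $e(G) - v(G) - f(G) = 2g - 2c(G)$. Substituting this into the displayed expression for $r_M(G) - r_{M'}(G)$ yields $r_M(G) - r_{M'}(G) = (2g - 2c(G)) + 2c(G) = 2g$, which is exactly the claim. If one prefers to avoid a disconnected‑surface convention, the same computation runs component by component: genus is additive over connected components of $\S$, and both $r_M(G) - r_{M'}(G)$ and the Euler characteristic defect split as sums over the components of $G$, so it suffices to check the connected case where $v - e + f = 2 - 2g$ is the standard formula.

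The only genuine point requiring care — and the step I expect to be the main obstacle — is justifying that the bond matroid of $G^*$ has the stated rank, i.e. that $r_{\cB(G^*)}(G) = e(G) - v(G^*) + c(G^*)$, and in particular the bookkeeping identification of $v(G^*)$ with the face count $f(G)$ and of the ground set of $\cB(G^*)$ with $E(G)$ via the duality of the embedding. This is precisely the content set up in the discussion of the matroid perspective $\cB(G^*) \to \cC(G)$ earlier in the paper (edges of $G$ transverse to edges of $G^*$, faces of $G$ $\leftrightarrow$ vertices of $G^*$), and once that correspondence and the dual‑rank formula $r(M) + r(M^*) = |M|$ applied to $\cC(G^*)$ are in hand, the lemma is a one‑line Euler‑characteristic computation. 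I would therefore organize the write‑up as: (1) expand $r_M(G)$ via dualization of $\cC(G^*)$; (2) expand $r_{M'}(G)$; (3) substitute $v(G^*) = f(G)$, $c(G^*) = c(G)$; (4) apply Euler's formula; (5) conclude $r_M(G) - r_{M'}(G) = 2g$.
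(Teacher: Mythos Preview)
Your proposal is correct and follows essentially the same route as the paper: both compute $r_M(G)=e(G)-(v(G^*)-c(G^*))$ (the paper phrases this via a spanning forest of $G^*$, you via the dual-rank identity $r(M)+r(M^*)=|M|$, which amounts to the same thing), substitute $v(G^*)=f(G)$ and $c(G^*)=c(G)=c(\S)$, and finish with Euler's formula $e-v-f+2c(\S)=2g$.
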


\begin{proof}
$r_{M'}(G)=v(G)-c(G)=e(T)$, where $T$ is a spanning forest of $G$.
The bond rank of a graph $G^*$ is the maximal number of edges that one can delete from it without increasing the number of connected components, i.e.\ all edges but a spanning forest, $T^*$, of the graph $G^*$.
So $r_M(G) = e(G^*) - e(T^*) = e(G) - e(T^*)$ and $r_M(G)-r_{M'}(G) = e(G) - e(T^*) - e(T)$.
But the number of edges in a spanning forest of $G$ is equal to the number of vertices minus the number of components.
Similarly, $e(T^*)$, the number of edges in a spanning forest of $G^*$, is the number of faces, $f$, of $G$ minus the number of components.
So 
$r_M(G)-r_{M'}(G) = e - (f-c(\S)) - (v-c(\S)) = e-f-v+2c(\S) = 2g(\S) = 2g$.
\end{proof}

\bigskip
\begin{lemma}\label{l:4-3}
$g+s(H)/2-s^\perp(H)/2=r_M(H)-r_{M'}(H)$.
\end{lemma}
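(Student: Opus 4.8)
The plan is to rewrite the right-hand side $r_M(H)-r_{M'}(H)$ as the dimension of the image
$V=V(H):=\im\big(H_1(H;\R)\to H_1(\S;\R)\big)$ of the spanning subgraph $H$ in the first homology of $\S$, and then to recover $g+s(H)/2-s^\perp(H)/2$ from $\dim V$ using the symplectic description of $s(H)$ and $s^\perp(H)$ recalled in the Remark of Section~\ref{s:Kp}.

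First I would do the matroidal bookkeeping. Since $M'=\cC(G)$, the nullity $n_{M'}(H)=|H|-r_{M'}(H)=|H|-v(G)+c(H)$ is exactly the first Betti number $\dim H_1(H;\R)$ of the spanning subgraph $H$, and by Lemma~\ref{l:4-1} the dimension of $\ker\big(H_1(H;\R)\to H_1(\S;\R)\big)$ equals $k(H)=n_M(H)$. Hence
$$\dim V=\dim H_1(H;\R)-k(H)=n_{M'}(H)-n_M(H)=\big(|H|-r_{M'}(H)\big)-\big(|H|-r_M(H)\big)=r_M(H)-r_{M'}(H),$$
so it remains to prove $\dim V=g+s(H)/2-s^\perp(H)/2$.

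Next I would bring in the intersection form on $H_1(\S;\R)$. Writing $V^\perp$ for the orthogonal complement of $V$ with respect to this form, the Remark of Section~\ref{s:Kp} gives
$$s(H)=\dim V-\dim(V\cap V^\perp),\qquad s^\perp(H)=\dim V^\perp-\dim(V\cap V^\perp),$$
whence $s(H)-s^\perp(H)=\dim V-\dim V^\perp$. Since $\S$ is closed and orientable, its intersection form is nondegenerate (a block sum of the nondegenerate alternating forms on its connected components), so $\dim V+\dim V^\perp=\dim H_1(\S;\R)=2g$. Combining these, $s(H)/2-s^\perp(H)/2=(\dim V-\dim V^\perp)/2=\dim V-g$, and adding $g$ gives $g+s(H)/2-s^\perp(H)/2=\dim V$, as wanted. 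Together with Lemma~\ref{l:4-2}, this is also the second bulleted identity in the proof of the Main Theorem.

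The only step that carries any real weight is the first reduction $\dim V=r_M(H)-r_{M'}(H)$, and that is essentially Lemma~\ref{l:4-1} repackaged together with the dual-rank formula for $M=\cB(G^*)$ already used earlier in this section; everything after that is linear algebra over $\R$. The place where one could get into trouble is if one did not wish to rely on Krushkal's symplectic description of $s$ and $s^\perp$: then one would instead read off $s(H)$ and $s^\perp(H)$ from the Mayer--Vietoris sequence of the decomposition of $\S$ into a regular neighborhood of $H$ and the closure of its complement, glued along the common boundary circles, and would have to identify the homology image of the complementary subsurface with $V^\perp$ via Poincar\'e--Lefschetz duality --- that identification of ``image of the complement $=$ symplectic orthogonal of $V$'' is the one genuinely delicate point on that alternative route.
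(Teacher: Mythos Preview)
Your proof is correct and follows essentially the same route as the paper's: both rewrite $r_M(H)-r_{M'}(H)$ as $n_{M'}(H)-n_M(H)$, invoke Lemma~\ref{l:4-1} to identify $n_M(H)=k(H)$, and then equate $n_{M'}(H)-k(H)$ with $g+s(H)/2-s^\perp(H)/2$. The only difference is that the paper cites Krushkal's formula~(2.5) for this last step as a black box, whereas you derive it yourself from the symplectic description of $s$ and $s^\perp$ via $\dim V+\dim V^\perp=2g$; your version is thus a bit more self-contained.
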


\begin{proof}
Consider $r_M(H)-r_{M'}(H)$.
For either matroid, $M$ or $M'$, rank is equal to the number of edges minus nullity.
So $r_M(H)-r_{M'}(H) = n_{M'}(H) - n_M(H)$.
We have seen in Lemma \ref{l:4-1} that $n_M(H)=k(H)$. Formula (2.5) in Krushkal's paper \cite{Kru} tells us that this nullity 
$$n_{M'}(H) = k(H) + g + s(H)/2 - s^\perp(H)/2\ .
$$
So we have $r_M(H)-r_{M'}(H)$ equaling
$$k(H) + g + \frac{s(H)}{2} - \frac{s^\perp(H)}{2} - k(H)
= g + \frac{s(H)}{2} - \frac{s^\perp(H)}{2}\ .
$$
\end{proof}

Lemma \ref{l:4-2} is a particular case of Lemma \ref{l:4-3} where we take $H=G$.
Indeed, in this case  we have $s(G)=2g$ and $s^\perp(G)=0$.

\bigskip
Lemmas \ref{l:4-2} and \ref{l:4-3} together imply
$$g-s(H)/2+s^\perp(H)/2=r_M(G)-r_{M'}(G)-r_M(H)+r_{M'}(H)\ .
$$
Thus the $z$ powers in the main theorem coincide. This completes the proof of the main theorem.

\section{Krushkal and Bollob\'as-Riordan polynomials} \label{s:Kp-BR}

A cellular embedding $G\hookrightarrow \S$ may be studied in terms of a {\it ribbon graph} which represents a regular neighborhood $\cG$ of $G$ in $\S$.
Working backwards, starting with a ribbon graph $\cG$, we can construct a surface $\S$ by capping all boundary components of $\cG$ by discs.
Then the core graph $G$ of $\cG$, obtained by contracting all edge-ribbons to their central lines and all vertex-discs to their central points, can be cellularly embedded in $\S$.
For ribbon graphs, we have the Bollob\'as-Riordan polynomial \cite{BR3} defined as
$$BR_\cG(X,Y,Z):=\sum_{\cH\subseteq \cG} 
(X-1)^{c(\cH)-c(\cG)}Y^{n(\cH)}Z^{c(\cH)-bc(\cH)+n(\cH)}\ ,
$$
where $bc(\cH)$ is the number of boundary components of the spanning ribbon subgraph $\cH$. Note that the exponent $c(\cH)-bc(\cH)+n(\cH)$ is equal to $2g(\cH)$ for oriented ribbon graphs.
 
V.~Krushkal proved \cite[Lemma 4.1]{Kru} that
\begin{equation}\label{eq:P2BR}
BR_\cG(X,Y,Z)=Y^g P_{G,\S}(X-1,Y,YZ^2,Y^{-1})\ .
\end{equation}
 
It was proved in \cite[Theorem 2]{BR3} that $BR_\cG$ is universal in the class of polynomials satisfying the contraction/deletion property.
The Krushkal polynomial also satisfies \emph{a} contraction/deletion property \cite[Lemma 2.1]{Kru}.
Based on that V.~Krushkal wrote that $BR_\cG$ and $P_{G,\S}$ carry equivalent information.
However this is not the case as the contraction/deletion properties for $BR_\cG$ and for $P_{G,\S}$ are not quite the same.
The problem arises when deletion of an edge of a ribbon graph changes its genus.
The genus might decrease by 1 with removal of an edge.
For example, if we delete a loop $e$ from the ribbon graph $\cG$ corresponding to $G$ from Example \ref{ex1}, then the resulting graph with a single loop will have genus zero.
So, while in the Bollob\'as-Riordan approach it is considered as a graph embedded into a sphere, in the Krushkal approach it is still embedded into the torus.
We cannot apply the substitution \eqref{eq:P2BR} to that graph
since its embedding on the torus is no longer cellular.
Thus the Krushkal polynomial does not satisfy the contraction/deletion property in the sense of Bollob\'as and Riordan.

We also find that the Las Vergnas polynomial $LV_{G,\S}(x,y,z)$ does not satisfy the contraction/deletion property in the sense of Bollob\'as and Riordan either.

\begin{exam}\label{ex3}
This is an example of a calculation of the three polynomials.
% showing that
%the Krushkal and the Las Vergnas polynomials do not satisfy the contraction/deletion property in the sense of Bollob\'as and Riordan.
Here $G$ is a graph on torus with two vertices and three edges $a$, $b$, and $c$. Its dual $G^*$ has one vertex and three loops. The ribbon graph corresponding to $G$ is denoted $\cG$. We use the same symbols $a$, $b$, $c$
to denote the corresponding edges in all three graphs.
$$G^*=\risS{-25}{Gd-torus}{\put(65,55){$a$}\put(35,34){$b$}
                        \put(58,6){$c$}}{100}{35}{35}\ \hspace{1cm}
G=\risS{-25}{G-torus}{\put(70,36){$a$}\put(35,-6){$b$}
                        \put(55,4){$c$}}{100}{35}{35}\ %\hspace{1cm}
$$
$$
\cG=\risS{-22}{cG-rg}{\put(67,45){$a$}\put(15,-3){$b$}
                        \put(55,-4){$c$}}{80}{0}{0}
$$
The matroid $M'=\cC(G)$ is of rank 1, and for any nonempty subset $H$, $r_{M'}(H)=1$. The cycle matroid $\cC(G^*)$ of the dual graph is of rank zero because $G^*$ has only loops. So its dual $M=\cB(G^*)$ has rank 3, all subsets $H$ are independent and $r_M(H)=|H|$. The next table shows the value of various parameters and contributions of all eight subsets $H\subseteq\{a,b,c\}$ to the three polynomials.
$$\begin{array}{cr||!{\ \makebox(0,10){}}c|c|c|c|c|c|c|c}
 &H& \emptyset&\{a\}&\{b\}&\{a,b\} &\{c\}&\{a,c\}&\{b,c\}&\{a,b,c\}\!\!\!  
        \\[2pt]
\hline\hline
 &c(H)& 2&1&1&1 &1&1&1&1 \\ \hhline{~---------}
 &k(H)& 0&0&0&0 &0&0&0&0 \\ \hhline{~---------}
 &s(H)& 0&0&0&0 &0&0&0&2 \\ \hhline{~---------}
 &s^\perp(H)& 2&2&2&0 &2&0&0&0 \\ \hhline{~---------}
\makebox(20,0){\raisebox{60pt}{\rotatebox{90}{Krushkal}}}
 &P_{G,\S}& XB&B&B&1 &B&1&1&A \\ \hline\hline

 &r_M(H)& 0&1&1&2 &1&2&2&3 \\ \hhline{~---------}
 &r_{M'}(H)& 0&1&1&1 &1&1&1&1 \\ \hhline{~---------}
 &n_M(H)& 0&0&0&0 &0&0&0&0 \\ \hhline{~---------}
\makebox(20,0){\raisebox{50pt}{\rotatebox{90}{Las Vergnas}}}
 &LV_{G,\S}& \!\!\!(x-1)z^2&z^2&z^2&z &z^2&z&z&1 \\ \hline\hline

 &c(\cH)& 2&1&1&1 &1&1&1&1 \\ \hhline{~---------}
 &n(\cH)& 0&0&0&1 &0&1&1&2 \\ \hhline{~---------}
 &bc(\cH)& 2&1&1&2 &1&2&2&1 \\ \hhline{~---------}
\makebox(20,0){\raisebox{50pt}{\rotatebox{90}{
              $\genfrac{}{}{0pt}{0}{\mbox{Bollob\'as}}{\mbox{Riordan}}$}}}
 &BR_\cG& (X-1)&1&1&Y &1&Y&Y&Y^2Z^2 \\ 
\end{array}
$$
Thus
$$P_{G,\S}=3+3B+XB +A,\qquad
LV_{G,\S}=3z+3z^2+(x-1)z^2+1,
$$
$$
BR_\cG=3+3Y+(X-1)+Y^2Z^2.
$$
One can readily confirm the relations \eqref{subst} and \eqref{eq:P2BR} from here.

Now if we contract the edge $c$, the graph $G/c$ still will be cellularly  embedded into the same torus $\S$, and its regular neighborhood coincides with the ribbon graph $\cG/c$.
Examples \ref{ex1} and \ref{ex2} and the right part of the table above give the following polynomials:
$$P_{G/c,\S}=B+2+A,\qquad
LV_{G/c,\S}=z^2+2z+1,\qquad
BR_{\cG/c}=1+2Y+Y^2Z^2.
$$
Meanwhile if we delete the edge $c$, then 
$$P_{G-c,\S}=XB+2B+1\ .
$$
But the graph $G-c$ is not cellularly embedded into the torus $\S$ any more. Thus the Las Vergnas and the Bollob\'as-Riordan polynomials are not defined for it. Its regular neighborhood gives the ribbon graph $\cG-c$ which, after capping the discs to its two boundary components, results in the sphere $S^2$. Thus the graph $G-c$ embeds cellularly into the the sphere $S^2$. For this embedding we have
$$P_{G-c,S^2}=X+2+Y,\qquad
LV_{G-c,S^2}=(x-1)+2+(y-1),\qquad
BR_{\cG-c}=(X-1)+2+Y.
$$
Therefore 
$$P_{G,\S}=P_{G-c,\S}+P_{G/c,\S}\qquad\mbox{and}\qquad
BR_{\cG}=BR_{\cG-c}+BR_{\cG/c}\ ,
$$
but
$$P_{G,\S}\not=P_{G-c,S^2}+P_{G/c,\S}\qquad\mbox{and}\qquad
LV_{G,\S}\not=LV_{G-c,S^2}+LV_{G/c,\S}\ .
$$
\end{exam}

\bigskip
Currently, according to relations \eqref{subst} and \eqref{eq:P2BR}, the Krushkal polynomial is the most general polynomial of graphs on surfaces and so it clearly deserves further research.
Also, because  the two relations look quite different, the Las Vergnas and the Bollob\'as-Riordan polynomials seem to be independent. 

Very recently the Krushkal polynomial was generalized to higher dimensional simplicial complexes \cite{KR}. It is related to a matroid 
%in a different way. Namely, to the matroids 
on the sets of simplices of the middle dimension for a triangulation of an even dimensional sphere, where a subset of simplices is indpendent if and only if they are mapped into the the linear independent chains by the simplicial boundary map. It turns out that the dual matroid corresponds to the dual triangulation and
the Tutte polynomial of this matroid corresponds to the ``higher dimensional" Tutte polynomial of the simplicial complexes (see \cite{KR} for details). 

\bigskip
%\newpage
{\bf Note added in proof.} After the paper has been submitted to the journal
Jonathan Michel found the following example of two ribbon graphs with the same
Las Vergnas polynomials but different Krushkal polynomials.
$$\cG_1=\ \risS{-30}{G1-JM}{}{60}{35}{35}\ \hspace{1cm}
\cG_2=\risS{-25}{G2-JM}{}{60}{35}{35}\ %\hspace{1cm}
$$
Let $\S_1$ (resp. $\S_2$) be a surface obtained from $\cG_1$ (resp. $\cG_2$)
by gluing a disc to its boundary component, and $G_1\subset\S_1$ 
(resp. $G_2\subset\S_2$) be the corresponding core graph. Then
$LV_{G_1,\S_1}=LV_{G_2,\S_2}=(1+z)^4$. But
$$P_{G_1,\S_1}= A^2+4A+2AB+4+4B+B^2$$
and
$$P_{G_2,\S_2}= A^2+4A+4AB+2+4B+B^2\ .$$

Jonathan Michel also found two different ribbon graphs with the same  Krushkal polynomials.
$$\cG_3=\ \risS{-25}{G3-JM}{}{100}{25}{35}\ \hspace{1cm}
\cG_4=\risS{-25}{G4-JM}{}{100}{0}{0}\ %\hspace{1cm}
$$
For the corresponding core graphs $G_3\subset\S_3$ and $G_4\subset\S_4$ we have
$$P_{G_3,\S_3}= P_{G_4,\S_4}=YA+4Y+A+2YB+3+2B+XYB+X+XB^2\ .$$

\bigskip

\end{document}